\definecolor{webgreen}{rgb}{0,.5,0}
\definecolor{webbrown}{rgb}{.6,0,0}
\newcommand{\seqnum}[1]{\href{http://oeis.org/#1}{\underline{#1}}}
\theoremstyle{plain}
\newtheorem{theorem}{Theorem}
\newtheorem{conjecture}[theorem]{Conjecture}
\newtheorem{proposition}[theorem]{Proposition}
\theoremstyle{definition}
\newtheorem{example}[theorem]{Example}
\newtheorem{algorithm}[theorem]{Algorithm}
\begin{document}

\begin{center}
	\textsc{\Large A Space Efficient Algorithm for the Calculation of the Digit Distribution in the Kolakoski Sequence} \vspace{1.5ex}
		
	\textsc{Johan Nilsson}\vspace{1.5ex}
	
	{\small Fakult\"at f\"ur Mathematik, Universit\"at Bielefeld \\
		Postfach 100131, 33501 Bielefeld, Germany }

	\href{mailto:jnilsson@math.uni-bielefeld.de}{\tt jnilsson@math.uni-bielefeld.de}\\
	{\small \today}
	
\end{center}

\begin{abstract}
With standard algorithms for generating the classical Kola\-koski sequence, the numerical calculation of the digit distribution requires a linear amount of space. 
Here, we present an algorithm for calculating the distribution of the digits in the classical Kolakoski sequence, that only requires a logarithmic amount of space and still runs in linear time. The algorithm is easily adaptable to generalised Kolakoski sequences.
\end{abstract}

\section{Introduction}

The classical Kolakoski sequence $K=(K_{n})_{n=1}^{\infty}$ is the unique sequence on the alphabet $\{\mathtt{1},\mathtt{2}\}$ defined as the sequence of its own symbols' run lengths starting with a $\mathtt{1}$. The classical Kolakoski sequence is given in \cite{kolakoski65, kolakoski66}, and is in the On-Line Encyclopedia of Integer Sequences \cite{sloane} with entry number \seqnum{A000002}. The first letters of $K$ are

\begin{equation}
\label{eq: def of kolakoski}
\begin{tabular}{c}
\begin{picture}(265,40)
\put(  0,  0){$K=$}
\put( 30,  0){$\mathtt{1}$}
\put( 46,  0){$\mathtt{2}$}
\put( 62,  0){$\mathtt{2}$}
\put( 78,  0){$\mathtt{1}$}
\put( 94,  0){$\mathtt{1}$}
\put(110,  0){$\mathtt{2}$}
\put(126,  0){$\mathtt{1}$}
\put(142,  0){$\mathtt{2}$}
\put(158,  0){$\mathtt{2}$}
\put(174,  0){$\mathtt{1}$}
\put(190,  0){$\mathtt{2}$}
\put(206,  0){$\mathtt{2}$}
\put(222,  0){$\mathtt{1}$}
\put(238,  0){$\mathtt{1}$}
\put(254,  0){$\ldots$}

\put(  0, 30){$K=$}
\put( 30, 30){$\mathtt{1}$}
\put( 54, 30){$\mathtt{2}$}
\put( 86, 30){$\mathtt{2}$}
\put(110, 30){$\mathtt{1}$}
\put(126, 30){$\mathtt{1}$}
\put(150, 30){$\mathtt{2}$}
\put(174, 30){$\mathtt{1}$}
\put(198, 30){$\mathtt{2}$}
\put(230, 30){$\mathtt{2}$}
\put(246, 30){$\ldots$}

\put( 33, 13){\line(0, 1){ 10}}
\multiput(49, 13)(32, 0){2} {\qbezier(0, 0)(4, 5)( 8,10) \qbezier(16, 0)(12, 5)( 8,10)}
\put(113, 13){\line(0, 1){10}}
\put(129, 13){\line(0, 1){10}}
\multiput(145, 13)(0, 0){1} {\qbezier(0, 0)(4, 5)( 8,10) \qbezier(16, 0)(12, 5)( 8,10)}
\put(177, 13){\line(0, 1){10}}
\multiput(193, 13)(32, 0){2}{\qbezier(0, 0)(4, 5)( 8,10) \qbezier(16, 0)(12, 5)( 8,10)}
\end{picture}
\end{tabular}
\end{equation}
There are several interesting questions, answered and unanswered, on the properties of the classical Kolakoski sequence;  Kimberling presents several of these in  \cite{kimberling}. One of the simplest, and yet unresolved, questions is that of the distribution of digits in $K$. If we let $o_{n}$ be the number of $\mathtt{1}$s in $K$ up to and including position $n$, that is $o_{n} = |\{i: K_{i}=\mathtt{1}, 1\leq i \leq n\}|$, then the conjecture is  

\begin{conjecture} 
\label{conj: distribution}
The limit $\lim_{n\to\infty} \frac{o_{n}}{n}$ exists and equals $\frac{1}{2}$.
\end{conjecture}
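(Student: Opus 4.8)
The plan is to exploit the self-referential structure of $K$: the fact that $K$ is its own run-length encoding forces an exact relation between the count $o_n$ and the counts along a shorter prefix, and I would try to bootstrap this into a renormalisation argument. Concretely, write $K = k_1 k_2 k_3 \cdots$. Since the run-length sequence of $K$ is $K$ itself, the $i$-th maximal run of $K$ has length $k_i$; and because consecutive runs alternate between blocks of $\mathtt{1}$s and blocks of $\mathtt{2}$s starting with a $\mathtt{1}$-run, the odd-indexed symbols $k_1, k_3, k_5, \dots$ are precisely the lengths of the $\mathtt{1}$-runs. Letting $m$ be the number of runs contained in the prefix $k_1 \cdots k_N$, so that $N = k_1 + \cdots + k_m$, we obtain
\[
o_N \;=\; \sum_{\substack{1 \le i \le m \\ i \text{ odd}}} k_i ,
\]
so the number of $\mathtt{1}$s in a long prefix is recovered as a partial sum over the odd positions of a \emph{shorter} prefix of the very same sequence. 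This is the engine I would iterate.

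First I would use this relation to show that the \emph{value} of the limit is forced, conditional on its existence together with a mild uniformity. Suppose $d = \lim_n o_n/n$ exists. Among $k_1,\dots,k_m$ the proportion of $\mathtt{1}$s is then $d$, so the average run length is $1\cdot d + 2\cdot(1-d) = 2-d$ and $N = \sum_{i\le m} k_i \sim m(2-d)$. If in addition the subsequence of odd-indexed symbols carries the same asymptotic $\mathtt{1}$-density $d$, then the roughly $\tfrac{m}{2}$ one-runs contribute $o_N \sim \tfrac{m}{2}(2-d)$. Equating this with $o_N \sim dN \sim d\,m(2-d)$ and cancelling $m(2-d)$ yields $d = \tfrac12$. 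Thus the arithmetic pins the density at $\tfrac12$ as soon as the sequence behaves uniformly enough along such subselections.

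The substantive half of the programme is to establish existence of the limit, and here I would pass to the dynamical reformulation. Let $X_K$ be the subshift generated by $K$ and let $\Phi$ be the run-length map, of which $K$ is the fixed point; the recursion above is the abelianised shadow of $\Phi$. I would attempt to prove that $X_K$ is uniquely ergodic, for then the ergodic theorem supplies $o_n/n \to \mu([\mathtt{1}])$ for the unique invariant measure $\mu$, together with well-defined frequencies for all finite words --- enough uniformity to legitimise the odd/even subselection of the previous paragraph --- and the self-consistency arithmetic then identifies $\mu([\mathtt{1}]) = \tfrac12$. The natural tool is a Perron--Frobenius / transfer-operator analysis of the matrix governing how run-length statistics propagate under $\Phi$.

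I expect precisely this last step to be the main obstacle, and indeed it is what keeps the statement a conjecture. Unlike a primitive constant-length substitution, $\Phi$ does not act as a substitution in the usual sense --- the letter filling a run is dictated by the run's parity rather than by the symbol being expanded --- so the abelianised map does not come with a clean dominant eigenvalue that would contract the fluctuations of $o_n/n$. Controlling these fluctuations, equivalently ruling out that the odd-position density can drift away from the global density under iteration of $\Phi$, is exactly the gap I do not see how to close, and I would regard any quantitative estimate forcing the two densities to coalesce as the crux of a genuine proof.
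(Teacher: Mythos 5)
You were asked to prove Conjecture~\ref{conj: distribution}, but the paper contains no proof of it: the statement is explicitly open (``both parts \ldots the existence and the value, are still open''), and the paper's actual contribution is a space-efficient algorithm supplying numerical evidence up to $n=10^{13}$. So there is no proof in the paper to compare against, and your proposal --- by your own candid admission --- is not a proof either. To its credit, the renormalisation engine is correct: since $K$ starts with a $\mathtt{1}$-run and runs alternate, the identity $o_N=\sum_{i\le m,\ i\ \mathrm{odd}} k_i$ with $N=k_1+\cdots+k_m$ holds, and the conditional arithmetic $o_N\sim d\,m(2-d)$ versus $o_N\sim \tfrac{m}{2}(2-d)$ forcing $d=\tfrac12$ is the standard folklore reduction showing the \emph{value} is pinned once enough uniformity is assumed. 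But the two hypotheses you condition on --- existence of the limit, and that the odd-indexed subselection carries the same density --- are precisely the open content of the conjecture, so nothing unconditional has been established.

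One further concrete caveat in your dynamical reformulation: even if you could prove unique ergodicity of $X_K$, that yields uniform frequencies of finite \emph{words}, whereas your subselection is indexed by run parity, i.e., by the parity of the number of runs completed before a given position. That parity is a $\mathbb{Z}_2$-valued cocycle over the subshift, not a local word statistic, so unique ergodicity alone would not legitimise the step $o_N\sim\tfrac{m}{2}(2-d)$; you would additionally need equidistribution of the associated parity skew product (or unique ergodicity of an induced system that tracks run parity). You correctly identify that you cannot establish even the first of these steps, and indeed neither can the literature; your attempt stops exactly where the problem genuinely stops, which is why the paper treats the statement as a conjecture and addresses it only computationally, via Algorithm~\ref{alg: main algorithm} and Table~\ref{table: classical kolakoski}.
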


Both parts of Conjecture \ref{conj: distribution}, the existence and the value, are still open. Several aspects of the conjecture (along with other properties and questions regarding the Kolakoski sequence as well) are considered by Dekking in \cite{dekking79, dekking80, dekking97}; see also the survey by Sing \cite{sing2010} and further references therein. 

In $\cite{steinsky}$ Steinsky describes a recursion that generates the letters $K_n$ and uses it to numerically calculate the distribution of the $\mathtt{1}$s up to $n = 3 \cdot 10^{8}$. It is worth noting that a straight-forward implementation of Steinsky's recursion leads to an algorithm that either runs in exponential time or requires a linear amount of space. For some time, Steinsky's result raised doubt as to the validity of Conjecture \ref{conj: distribution}, however subsequent work by Monteil \cite{monteil} suggested once again that the conjecture should hold. 
Monteil used a brute force method, requiring linear time and linear space in $n$, to push the calculation to $n=10^{11}$. 

The brute force, or straight-forward, method to find $o_n$ generates a prefix of length $n$ of the sequence $K$, using the intuitive method suggested by (\ref{eq: def of kolakoski}). That is, starting from a suitable initial sequence, we step through and read off the symbols one by one, with each letter telling us what to write in the sequence beneath, and thus what to append to the end of the current sequence.

We present here an algorithm which runs in linear time, yet only requires a logarithmic amount of space to find $o_n$. Using our algorithm, we can easily push the calculation further than the calculation made by Monteil; we present here values of $o_n$ up to $n= 10^{13}$ (Table \ref{table: classical kolakoski}).
Our calculation indicates that Conjecture \ref{conj: distribution} should hold, but once again gives no definite answer. We present our algorithm in Section \ref{sec: algorithm} and state and prove the algorithm's run time performance in Section \ref{sec: analysis}. In Section \ref{sec: general}, we briefly remark on our algorithm's adaptability to more general Kolakoski sequences, and finally in Section \ref{sec: calculations} we present the results of our calculations.

\section{The Algorithm}
\label{sec: algorithm}

We present here an algorithm for calculating the number of $\mathtt{1}$s and $\mathtt{2}$s in the classical Kolakoski sequence $K$ up to a position $n$. Our algorithm is more memory-efficient than the straight-forward algorithm for finding $K_{n}$; it requires only $O(\log n)$ amount of space (Proposition \ref{prop: space}) compared to the $O(n)$ for a brute force algorithm. 
Here we use the standard asymptotic notation $O(n)$. That is, we write $f(n) = O(g(n))$ if there is a constant $c$ such that $f(n)\leq c\,g(n)$ for all $n$. (For more of this see \cite{cormen}.) The run time of our algorithm is $O(n)$ to find $o_n$ (Proposition \ref{prop: time}); this is the same as for the brute force method.

The idea in our algorithm is that if we set out only to find $o_{n}$, we do not have to save the complete sequence up to position $n$ when stepping through the sequence $K$. 
As in the intuitive way of generating $K$, we look back at a previous position to see which symbol run to append.
However, this previous position is itself determined by a letter even further back, and so on. 
If we keep track only of these positions that we ``look back at'', we can drastically reduce the amount of space needed by the algorithm.

To get a hint of how this can be done, we take as a starting point a scheme, as in (\ref{eq: def of kolakoski}). We see that the upper row defines (or conversely, may be defined as) the run lengths of the symbols in the lower one. 
We expand this scheme by adding more rows above and connecting each symbol to the symbol in the row above that has (via run length) generated it. In this way, we obtain a tree structure, as illustrated in Figure \ref{fig: kolakoski tree}.

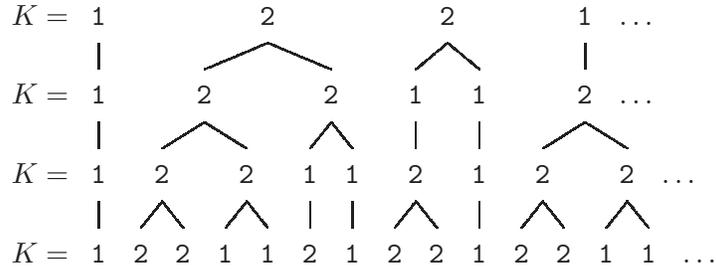
\begin{figure}
\begin{center}
\begin{tabular}{c}
\begin{picture}(265,100)
\put(  0,  0){$K=$}
\put( 30,  0){$\mathtt{1}$}
\put( 46,  0){$\mathtt{2}$}
\put( 62,  0){$\mathtt{2}$}
\put( 78,  0){$\mathtt{1}$}
\put( 94,  0){$\mathtt{1}$}
\put(110,  0){$\mathtt{2}$}
\put(126,  0){$\mathtt{1}$}
\put(142,  0){$\mathtt{2}$}
\put(158,  0){$\mathtt{2}$}
\put(174,  0){$\mathtt{1}$}
\put(190,  0){$\mathtt{2}$}
\put(206,  0){$\mathtt{2}$}
\put(222,  0){$\mathtt{1}$}
\put(238,  0){$\mathtt{1}$}
\put(254,  0){$\ldots$}

\put(  0, 30){$K=$}
\put( 30, 30){$\mathtt{1}$}
\put( 54, 30){$\mathtt{2}$}
\put( 86, 30){$\mathtt{2}$}
\put(110, 30){$\mathtt{1}$}
\put(126, 30){$\mathtt{1}$}
\put(150, 30){$\mathtt{2}$}
\put(174, 30){$\mathtt{1}$}
\put(198, 30){$\mathtt{2}$}
\put(230, 30){$\mathtt{2}$}
\put(246, 30){$\ldots$}

\put(  0, 60){$K=$}
\put( 30, 60){$\mathtt{1}$}
\put( 70, 60){$\mathtt{2}$}
\put(118, 60){$\mathtt{2}$}
\put(150, 60){$\mathtt{1}$}
\put(174, 60){$\mathtt{1}$}
\put(214, 60){$\mathtt{2}$}
\put(230, 60){$\ldots$}

\put(  0, 90){$K=$}
\put( 30, 90){$\mathtt{1}$}
\put( 94, 90){$\mathtt{2}$}
\put(162, 90){$\mathtt{2}$}
\put(214, 90){$\mathtt{1}$}
\put(230, 90){$\ldots$}
\put( 33, 13){\line(0, 1){10}}
\multiput( 49, 13)(32, 0){2}{\qbezier( 0, 0)( 4, 5)( 8,10) \qbezier(16, 0)(12, 5)( 8,10)}
\put(113, 13){\line(0, 1){ 10}}
\put(129, 13){\line(0, 1){ 10}}
\multiput(145, 13)( 0, 0){1}{\qbezier( 0, 0)( 4, 5)( 8,10) \qbezier(16, 0)(12, 5)( 8,10)}
\put(177, 13){\line(0, 1){ 10}}
\multiput(193, 13)(32, 0){2}{\qbezier(0, 0)(4, 5)(8, 10)\qbezier(16, 0)(12, 5)(8, 10)}

\put( 33, 43){\line(0, 1){ 10}}
\multiput( 57, 43)( 0, 0){1}{\qbezier( 0, 0)( 8, 5)(16,10) \qbezier(32, 0)(24, 5)(16,10)}
\multiput(113, 43)( 0, 0){1}{\qbezier( 0, 0)( 4, 5)( 8,10) \qbezier(16, 0)(12, 5)( 8,10)}
\put( 153, 43){\line(0, 1){ 10}}
\put( 177, 43){\line(0, 1){ 10}}
\multiput(201, 43)( 0, 0){1}{\qbezier( 0, 0)( 8, 5)(16,10) \qbezier(32, 0)(24, 5)(16,10)}

\put( 33, 73){\line(0, 1){ 10}}
\multiput( 73, 73)( 0, 0){1}{\qbezier( 0, 0)(12, 5)(24,10) \qbezier(48, 0)(36, 5)(24,10)}
\multiput(153, 73)( 0, 0){1}{\qbezier( 0, 0)( 6, 5)(12,10) \qbezier(24, 0)(18, 5)(12,10)}
\put(217, 73){\line(0, 1){ 10}}

\end{picture}
\end{tabular}
\caption{\label{fig: kolakoski tree}The tree structure in the Kolakoski sequence.}
\end{center}
\end{figure}

We may thus interpret the letters in the classical Kolakoski sequence $K$ as the leaves of a tree, (the leaves are the symbols in the bottom row in Figure \ref{fig: kolakoski tree}). Each internal node in this tree structure is a symbol in in an upper row interpreted as a run length. Each letter is connected to the letter above that has generated it (called an ancestor), and also to the letter(s) below that it generates, termed children. This tree structure continues upwards without bound as we step through the symbols of the Kolakoski sequence. However, we only need to go up in the tree until we find an ancestor, to the leaf we are currently looking at, at a left most position. 

From this point on we shall consider the sequence $K'$, defined by $K= \mathtt{1}K'$. This simplifies matters somewhat, as we do not then have to deal with the left most $\mathtt{1}$s at each height in the tree.  

The algorithm for finding $o_n$ can concisely be described as an  ``in-order traverse'' of this tree structure, where we start from the lower left, and where we keep track of the symbols we see in the leaves during the traverse. While traversing, we add new ancestors when needed; that is we build the tree as we traverse it.
To reduce the memory requirement, we dynamically generate and keep track only of the part of the tree that we currently use for the traverse. While doing so, we store the ancestors along with an indicator that tells us which of its children we have already traversed. To this end, we introduce pointers $P_k$, which are assigned values from the set $S = \{\mathtt{1},\mathtt{2},\mathtt{11},\mathtt{22}\}$. 
Note that here, a run is defined as word from the set $S$.
At any given time, the pointer $P_0$ holds the current run in the leaves and $P_1$ holds the ancestor to $P_0$. Similarly, any $P_k$ that has been initiated holds the ancestor to $P_{k-1}$. 

We say here that pointers ``hold'' and not ``are'' a run because $P_k$ may contain more 
than just the single-symbol ancestor of $P_{k-1}$, it may also contain a sibling of $P_{k}$. Here we refer to the single symbols (that is, $\mathtt{1}$s or $\mathtt{2}$s) of a two symbol run ($\mathtt{11}$ or $\mathtt{22}$) as siblings.

The algorithm can now be described as follows.

\begin{algorithm}
\label{alg: main algorithm}~

\begin{itemize}
\item[-] To increase (or to assign a new value to) the pointer $P_k$ we proceed as follows. Firstly, if $P_k$ has not been initiated, let $P_k = \mathtt{22}$. 
If $P_k$, for $k>0$, contains two symbols then remove one of the symbols in $P_k$; otherwise (if $k=0$), increase $P_1$.

If, on the other hand, $P_k$ contains only one symbol, then increase $P_{k+1}$ recursively. When this increment is done, the new run to write in $P_{k}$ is of the length given by the first symbol in $P_{k+1}$ and the run to write has symbol(s) opposite to the symbol(s) previously held by $P_{k}$. Note that here we do not remove the first symbol of $P_{k+1}$ when we return from the recursion. 

\item[-] To step throw the sequence $K$ (from its second symbol onwards) and calculate $o_n$, we repeatedly increment the pointer $P_0$ and keep track of the number of $\mathtt{1}$s and $\mathtt{2}$s that we see.  
\hfill$\diamond$
\end{itemize}
\end{algorithm}

Note that for a given run contained in $P_0$, the algorithm will generate only the pointers $P_1,\ldots, P_N$ to $P_0$, where the ancestor in $P_N$ is at the left most position in the sequence $K'$. (And it is this height $N$ that we shall shortly show is of the order of $\log n$ when $P_0$ holds the $n$th letter in the sequence).
As we step through the algorithm, we shall see that the successive runs held by the pointer $P_0$ (and also for other $P_k$) are the symbols in the sequence $K'$. 
nI pseudo-code the increment of $P_0$, (or the step by step traverse of the leaves), would be done with the recursive call of the procedure \texttt{IncrementPointer} as presented below.

\small
\begin{verbatim}
// Increments the pointer at height n. 
// After initiating P[0] succesive calls to IncrementPointer(0)
// will yield the Kolakoski sequence from the second term onward.

IncrementPointer(int k)
{   if(P[k] has not been initiated) 
    {   P[k] = 22
    }

    if(k == 0)
    {   IncrementPointer(1)
        if(P[0] == 1 or P[0] == 11)
        {   P[0] = (P[1] == 1) ? 2 : 22
        }else
        {   P[0] = (P[1] == 1) ? 1 : 11
        }
    }else if(P[k] == 1)
    {   IncrementPointer(k+1)
        P[k] = (P[k+1] == 1 or P[k+1] == 11) ? 2 : 22
    }else if(P[k] == 2)
    {   IncrementPointer(k+1)
        P[k] = (P[k+1] == 1 or P[k+1] == 11) ? 1 : 11
    }else if(P[k] == 11)
    {   P[k] = 1
    }else
    {   P[k] = 2
    }
}
\end{verbatim}
\normalsize

To illustrate how the algorithm works, we now present through of its initial steps.

\begin{example}
Incrementing the pointer $P_{0}$ once is done through the following procedure;

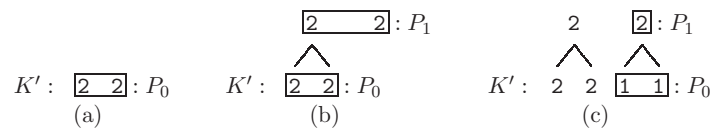
\begin{figure}[H]
\begin{center}
\scalebox{0.8}{
\begin{tabular}{c@{\hspace{30pt}}c@{\hspace{30pt}}c}
\begin{picture}(70,50)
\put(  0,  0){$K':$}
\put( 30,  0){$\mathtt{2}$}
\put( 46,  0){$\mathtt{2}$}
\put( 29, -1){\framebox(24,10){}}
\put( 56,  0){$:P_0$}
\end{picture}
&
\begin{picture}(94,50)
\put(  0,  0){$K':$}
\put( 30,  0){$\mathtt{2}$}
\put( 46,  0){$\mathtt{2}$}
\put( 29, -1){\framebox(24,10){}}
\put( 56,  0){$:P_0$}
\put( 38, 30){$\mathtt{2}$}
\put( 70, 30){$\mathtt{2}$}
\put( 37, 29){\framebox(40,10){}}
\put( 80, 30){$:P_1$}
\multiput(33, 13)(32, 0){1}{\qbezier(0, 0)(4, 5)(8,10)\qbezier(16, 0)(12, 5)(8,10)}
\end{picture}
&
\begin{picture}(102,50)
\put(  0,  0){$K':$}
\put( 30,  0){$\mathtt{2}$}
\put( 46,  0){$\mathtt{2}$}
\put( 62,  0){$\mathtt{1}$}
\put( 78,  0){$\mathtt{1}$}
\put( 61, -1){\framebox(24,10){}}
\put( 88,  0){$:P_0$}
\put( 38,  30){$\mathtt{2}$}
\put( 70,  30){$\mathtt{2}$}
\put( 69, 29){\framebox( 8,10){}}
\put( 80, 30){$:P_1$}
\multiput(33, 13)(32, 0){2}{\qbezier(0, 0)(4, 5)(8,10) \qbezier(16, 0)(12, 5)(8,10)}
\end{picture}
\\
(a) & (b) &(c)
\end{tabular}
}
\end{center}
\caption{\label{fig: first}The first increment of the pointer $P_0$.}
\end{figure}

Figure \ref{fig: first} illustrates the first increment of the pointer $P_0$ in the algorithm. 
(a) The initiation of $P_0$. The framed symbols $\mathtt{22}$ are the contents of the pointer $P_0$. 
(b) To continue our leaf traverse we must generate the next leaf. This is done by looking at the ancestor of the run held by $P_0$. As this ancestor does not exist we have to generate it, that is we set $P_1 = \mathtt{22}$. 
(c) The first symbol of $P_1$ already has children (that is, it generated the initial run held by $P_0$). 
Therefore we step to the second symbol of $P_1$. The new run to assign to $P_0$ (that is, the new leaf we traverse) is then $\mathtt{11}$, since the current symbol in $P_1$ is $\mathtt{2}$ and $P_0$ currently holds the run $\mathtt{22}$.

\begin{figure}[H]
\begin{center}
\scalebox{0.8}{
\begin{tabular}{c@{\hspace{30pt}}c@{\hspace{30pt}}c}
\begin{picture}(110,80)
\put(  0,  0){$K':$}
\put( 30,  0){$\mathtt{2}$}
\put( 46,  0){$\mathtt{2}$}
\put( 62,  0){$\mathtt{1}$}
\put( 78,  0){$\mathtt{1}$}
\put( 61, -1){\framebox(24,10){}}
\put( 88,  0){$:P_0$}
\put( 38,  30){$\mathtt{2}$}
\put( 70,  30){$\mathtt{2}$}
\put( 69, 29){\framebox( 8,10){}}
\put( 80, 30){$:P_1$}
\put( 54, 60){$\mathtt{2}$}
\put(102, 60){$\mathtt{2}$}
\put( 53, 59){\framebox( 56,10){}}
\put(112, 60){$:P_2$}
\multiput(33, 13)(32, 0){2}{\qbezier(0, 0)(4, 5)( 8,10) \qbezier(16, 0)(12, 5)( 8,10)}
\multiput(41, 43)( 0, 0){1}{\qbezier(0, 0)(8, 5)(16,10) \qbezier(32, 0)(24, 5)(16,10)}
\end{picture}
&
\begin{picture}(124,80)
\put(  0,  0){$K':$}
\put( 30,  0){$\mathtt{2}$}
\put( 46,  0){$\mathtt{2}$}
\put( 62,  0){$\mathtt{1}$}
\put( 78,  0){$\mathtt{1}$}
\put( 61, -1){\framebox(24,10){}}
\put( 88,  0){$:P_0$}
\put( 38, 30){$\mathtt{2}$}
\put( 70, 30){$\mathtt{2}$}
\put( 94, 30){$\mathtt{1}$}
\put(110, 30){$\mathtt{1}$}
\put( 93, 29){\framebox(24,10){}}
\put(120, 30){$:P_1$}
\put( 54, 60){$\mathtt{2}$}
\put(102, 60){$\mathtt{2}$}
\put(101, 59){\framebox( 8,10){}}
\put(112, 60){$:P_2$}
\multiput(33, 13)(32, 0){2}{\qbezier(0, 0)(4, 5)( 8,10) \qbezier(16, 0)(12, 5)( 8,10)}
\multiput(41, 43)( 0, 0){1}{\qbezier(0, 0)(8, 5)(16,10) \qbezier(32, 0)(24, 5)(16,10)}
\multiput(97, 43)( 0, 0){1}{\qbezier(0, 0)(4, 5)( 8,10) \qbezier(16, 0)(12, 5)( 8,10)}
\end{picture}
&
\begin{picture}(124,80)
\put(  0,  0){$K':$}
\put( 30,  0){$\mathtt{2}$}
\put( 46,  0){$\mathtt{2}$}
\put( 62,  0){$\mathtt{1}$}
\put( 78,  0){$\mathtt{1}$}
\put( 94,  0){$\mathtt{2}$}
\put( 93, -1){\framebox(8,10){}}
\put(104,  0){$:P_0$}
\put( 38, 30){$\mathtt{2}$}
\put( 70, 30){$\mathtt{2}$}
\put( 94, 30){$\mathtt{1}$}
\put(110, 30){$\mathtt{1}$}
\put( 93, 29){\framebox(24,10){}}
\put(120, 30){$:P_1$}
\put( 54, 60){$\mathtt{2}$}
\put(102, 60){$\mathtt{2}$}
\put(101, 59){\framebox( 8,10){}}
\put(112, 60){$:P_2$}
\multiput(33, 13)(32, 0){2}{\qbezier(0, 0)(4, 5)( 8,10) \qbezier(16, 0)(12, 5)( 8,10)}
\multiput(41, 43)( 0, 0){1}{\qbezier(0, 0)(8, 5)(16,10) \qbezier(32, 0)(24, 5)(16,10)}
\multiput(97, 43)( 0, 0){1}{\qbezier(0, 0)(4, 5)( 8,10) \qbezier(16, 0)(12, 5)( 8,10)}
\put( 97, 13){\line(0, 1){10}}
\end{picture}
\\
(a) & (b) & (c)
\end{tabular}
}
\caption{\label{fig: second}The second increment of the pointer $P_0$.}
\end{center}
\end{figure}
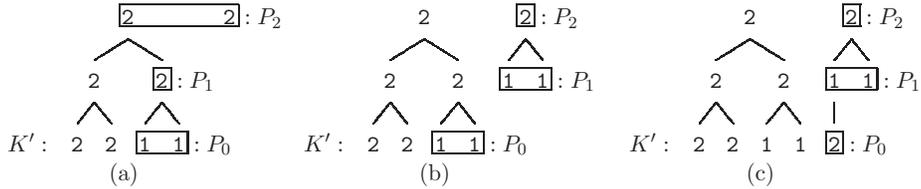

Figure \ref{fig: second} illustrates the second increment of the pointer $P_0$ in the algorithm. 
(a) To generate the next leaf we have to look at the ancestor of the run currently held by $P_0$. That is, we look at the pointer $P_1$. 
But since we have already used the symbol in $P_1$ we have to recursively look at the ancestor of $P_1$. This does not exist, so we initiate the ancestor and pointer $P_2 = \mathtt{22}$. 
(b) As the first symbol of $P_2$ already has children, we step to its second symbol. The new run to assign to $P_1$ is then $\mathtt{11}$, since the relevant ancestor in $P_2$ is $\mathtt{2}$ and $P_1$ currently holds the run $\mathtt{22}$. 
(c) We have not yet generated any of the children of any of the symbols held by $P_1$ and therefore the current one is the first one. This provides the new run of $\mathtt{2}$ in $P_0$, since the first symbol in $P_1$ is $\mathtt{1}$ and $P_0$ currently holds $\mathtt{11}$.

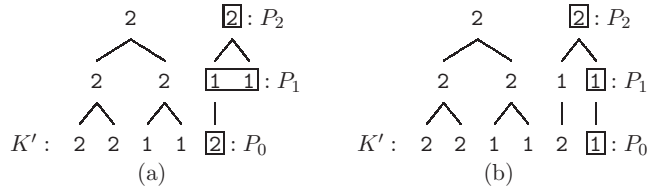
\begin{figure}[H]
\begin{center}
\scalebox{0.8}{
\begin{tabular}{c@{\hspace{30pt}}c@{\hspace{30pt}}c}
\begin{picture}(134,80)
\put(  0,  0){$K':$}
\put( 30,  0){$\mathtt{2}$}
\put( 46,  0){$\mathtt{2}$}
\put( 62,  0){$\mathtt{1}$}
\put( 78,  0){$\mathtt{1}$}
\put( 94,  0){$\mathtt{2}$}
\put( 93, -1){\framebox(8,10){}}
\put(104,  0){$:P_0$}
\put( 38, 30){$\mathtt{2}$}
\put( 70, 30){$\mathtt{2}$}
\put( 94, 30){$\mathtt{1}$}
\put(110, 30){$\mathtt{1}$}
\put( 93, 29){\framebox(24,10){}}
\put(120, 30){$:P_1$}
\put( 54, 60){$\mathtt{2}$}
\put(102, 60){$\mathtt{2}$}
\put(101, 59){\framebox( 8,10){}}
\put(112, 60){$:P_2$}
\multiput(33, 13)(32, 0){2}{\qbezier(0, 0)(4, 5)( 8,10) \qbezier(16, 0)(12, 5)( 8,10)}
\multiput(41, 43)( 0, 0){1}{\qbezier(0, 0)(8, 5)(16,10) \qbezier(32, 0)(24, 5)(16,10)}
\multiput(97, 43)( 0, 0){1}{\qbezier(0, 0)(4, 5)( 8,10) \qbezier(16, 0)(12, 5)( 8,10)}
\put( 97, 13){\line(0, 1){10}}
\end{picture}
&
\begin{picture}(134,80)
\put(  0,  0){$K':$}
\put( 30,  0){$\mathtt{2}$}
\put( 46,  0){$\mathtt{2}$}
\put( 62,  0){$\mathtt{1}$}
\put( 78,  0){$\mathtt{1}$}
\put( 94,  0){$\mathtt{2}$}
\put(110,  0){$\mathtt{1}$}
\put(109, -1){\framebox(8,10){}}
\put(120,  0){$:P_0$}
\put( 38, 30){$\mathtt{2}$}
\put( 70, 30){$\mathtt{2}$}
\put( 94, 30){$\mathtt{1}$}
\put(110, 30){$\mathtt{1}$}
\put(109, 29){\framebox( 8,10){}}
\put(120, 30){$:P_1$}
\put( 54, 60){$\mathtt{2}$}
\put(102, 60){$\mathtt{2}$}
\put(101, 59){\framebox( 8,10){}}
\put(112, 60){$:P_2$}
\multiput(33, 13)(32, 0){2}{\qbezier(0, 0)( 4, 5)( 8,10) \qbezier(16, 0)(12, 5)( 8,10)}
\multiput(41, 43)( 0, 0){1}{\qbezier(0, 0)( 8, 5)(16,10) \qbezier(32, 0)(24, 5)(16,10)}
\multiput(97, 43)( 0, 0){1}{\qbezier(0, 0)( 4, 5)( 8,10) \qbezier(16, 0)(12, 5)( 8,10)}
\put( 97, 13){\line(0, 1){10}}
\put(113, 13){\line(0, 1){10}}
\end{picture}
\\
(a) & (b)
\end{tabular}
}
\caption{\label{fig: third}The third increment of the pointer $P_0$.}
\end{center}
\end{figure}

Figure \ref{fig: third} illustrates the third increment of the pointer $P_0$ in the algorithm.
(a) The status of the pointers after the second increment of $P_0$. Note that we have only used the first symbol held by $P_1$. 
(b) To generate the next leaf we look at the ancestor of the run currently held by $P_0$, that is $P_1$, which contains the run $\mathtt{11}$. The first symbol already has a child, so we use the second symbol, $\mathtt{1}$, to generate the new run in $P_0$, which is $\mathtt{2}$, as $P_0$ currently holds the run $\mathtt{1}$.

\begin{figure}[H]
\begin{center}
\scalebox{0.8}{
\begin{tabular}{c@{\hspace{30pt}}c}
\begin{picture}(168,110)
\put(  0,  0){$K':$}
\put( 30,  0){$\mathtt{2}$}
\put( 46,  0){$\mathtt{2}$}
\put( 62,  0){$\mathtt{1}$}
\put( 78,  0){$\mathtt{1}$}
\put( 94,  0){$\mathtt{2}$}
\put(110,  0){$\mathtt{1}$}
\put(109, -1){\framebox(8,10){}}
\put(120,  0){$:P_0$}
\put( 38, 30){$\mathtt{2}$}
\put( 70, 30){$\mathtt{2}$}
\put( 94, 30){$\mathtt{1}$}
\put(110, 30){$\mathtt{1}$}
\put(109, 29){\framebox( 8,10){}}
\put(120, 30){$:P_1$}
\put( 54, 60){$\mathtt{2}$}
\put(102, 60){$\mathtt{2}$}
\put(101, 59){\framebox( 8,10){}}
\put(112, 60){$:P_2$}
\put( 78, 90){$\mathtt{2}$}
\put(146, 90){$\mathtt{2}$}
\put( 77, 89){\framebox(76,10){}}
\put(156, 90){$:P_3$}
\multiput(33, 13)(32, 0){2}{\qbezier(0, 0)(4, 5)( 8, 10) \qbezier(16, 0)(12, 5)( 8,10)}
\multiput(41, 43)( 0, 0){1}{\qbezier(0, 0)(8, 5)(16, 10) \qbezier(32, 0)(24, 5)(16,10)}
\multiput(97, 43)( 0, 0){1}{\qbezier(0, 0)(4, 5)( 8, 10) \qbezier(16, 0)(12, 5)( 8,10)}
\put( 97, 13){\line(0, 1){10}}
\put(113, 13){\line(0, 1){10}}
\multiput(57, 73)(0, 0){1}{\qbezier( 0, 0)(12, 5)(24,10) \qbezier(48, 0)(36, 5)(24,10)}
\end{picture}
&
\begin{picture}(180,110)
\put(  0,  0){$K':$}
\put( 30,  0){$\mathtt{2}$}
\put( 46,  0){$\mathtt{2}$}
\put( 62,  0){$\mathtt{1}$}
\put( 78,  0){$\mathtt{1}$}
\put( 94,  0){$\mathtt{2}$}
\put(110,  0){$\mathtt{1}$}
\put(109, -1){\framebox(8,10){}}
\put(120,  0){$:P_0$}
\put( 38, 30){$\mathtt{2}$}
\put( 70, 30){$\mathtt{2}$}
\put( 94, 30){$\mathtt{1}$}
\put(110, 30){$\mathtt{1}$}
\put(109, 29){\framebox( 8,10){}}
\put(120, 30){$:P_1$}
\put( 54, 60){$\mathtt{2}$}
\put(102, 60){$\mathtt{2}$}
\put(134, 60){$\mathtt{1}$}
\put(158, 60){$\mathtt{1}$}
\put(133, 59){\framebox(32,10){}}
\put(168, 60){$:P_2$}
\put( 78, 90){$\mathtt{2}$}
\put(146, 90){$\mathtt{2}$}
\put(145, 89){\framebox( 8,10){}}
\put(156, 90){$:P_3$}
\multiput(33, 13)(32, 0){2}{\qbezier(0, 0)( 4, 5)( 8,10) \qbezier(16, 0)(12, 5)( 8,10)}
\multiput(41, 43)( 0, 0){1}{\qbezier(0, 0)( 8, 5)(16,10) \qbezier(32, 0)(24, 5)(16,10)}
\multiput(97, 43)( 0, 0){1}{\qbezier(0, 0)( 4, 5)( 8,10) \qbezier(16, 0)(12, 5)( 8,10)}
\put( 97, 13){\line(0, 1){10}}
\put(113, 13){\line(0, 1){10}}
\multiput( 57, 73)(32, 0){1}{\qbezier( 0, 0)(12, 5)(24,10) \qbezier(48, 0)(36, 5)(24,10)}
\multiput(137, 73)(32, 0){1}{\qbezier( 0, 0)( 6, 5)(12,10) \qbezier(24, 0)(18, 5)(12,10)}
\end{picture}
\\
(a) & (b)
\end{tabular}
}
\caption{\label{fig: fourth first}The first part of the fourth increment of the pointer $P_0$.}
\end{center}
\end{figure}
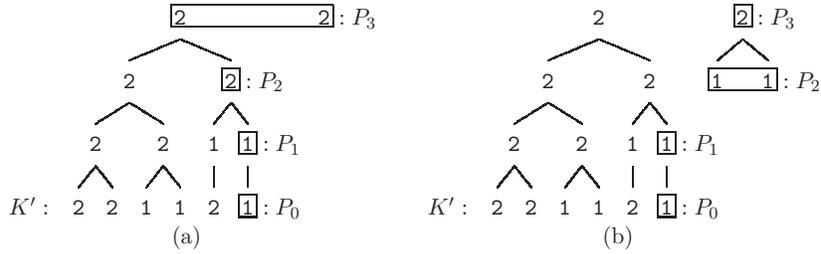

Figure \ref{fig: fourth first} illustrates the first part of the fourth increment of the pointer $P_0$ in the algorithm.
(a) To increase $P_0$ we have to look at the ancestors of the run held by $P_0$. We see that we have used all symbols in all of the ancestors, therefore we have to initiate  the new pointer $P_3=\mathtt{22}$.
(b) We have already used the first symbol held by $P_3$ and therefore we step to its second symbol. The new run to assign to $P_2$ is now $\mathtt{11}$ since  $P_3=\mathtt{2}$ and $P_2= \mathtt{22}$.

\begin{figure}[H]
\begin{center}
\scalebox{0.8}{
\begin{tabular}{c@{\hspace{30pt}}c@{\hspace{30pt}}c}
\begin{picture}(180,110)
\put(  0,  0){$K':$}
\put( 30,  0){$\mathtt{2}$}
\put( 46,  0){$\mathtt{2}$}
\put( 62,  0){$\mathtt{1}$}
\put( 78,  0){$\mathtt{1}$}
\put( 94,  0){$\mathtt{2}$}
\put(110,  0){$\mathtt{1}$}
\put(109, -1){\framebox(8,10){}}
\put(120,  0){$:P_0$}
\put( 38, 30){$\mathtt{2}$}
\put( 70, 30){$\mathtt{2}$}
\put( 94, 30){$\mathtt{1}$}
\put(110, 30){$\mathtt{1}$}
\put(134, 30){$\mathtt{2}$}
\put(133, 29){\framebox( 8,10){}}
\put(144, 30){$:P_1$}
\put( 54, 60){$\mathtt{2}$}
\put(102, 60){$\mathtt{2}$}
\put(134, 60){$\mathtt{1}$}
\put(158, 60){$\mathtt{1}$}
\put(133, 59){\framebox(32,10){}}
\put(168, 60){$:P_2$}
\put( 78, 90){$\mathtt{2}$}
\put(146, 90){$\mathtt{2}$}
\put(145, 89){\framebox( 8,10){}}
\put(156, 90){$:P_3$}
\multiput( 33, 13)(32, 0){2}{\qbezier( 0, 0)( 4, 5)( 8,10) \qbezier(16, 0)(12, 5)( 8,10)}
\multiput( 41, 43)( 0, 0){1}{\qbezier( 0, 0)( 8, 5)(16,10) \qbezier(32, 0)(24, 5)(16,10)}
\multiput( 97, 43)( 0, 0){1}{\qbezier( 0, 0)( 4, 5)( 8,10) \qbezier(16, 0)(12, 5)( 8,10)}
\put( 97, 13){\line(0, 1){10}}
\put(113, 13){\line(0, 1){10}}
\multiput( 57, 73)(32, 0){1}{\qbezier( 0, 0)(12, 5)(24,10) \qbezier(48, 0)(36, 5)(24,10)}
\multiput(137, 73)(32, 0){1}{\qbezier( 0, 0)( 6, 5)(12,10) \qbezier(24, 0)(18, 5)(12,10)}
\put(137, 43){\line(0, 1){ 10}}
\end{picture}
&
\begin{picture}(180,110)
\put(  0,  0){$K':$}
\put( 30,  0){$\mathtt{2}$}
\put( 46,  0){$\mathtt{2}$}
\put( 62,  0){$\mathtt{1}$}
\put( 78,  0){$\mathtt{1}$}
\put( 94,  0){$\mathtt{2}$}
\put(110,  0){$\mathtt{1}$}
\put(126,  0){$\mathtt{2}$}
\put(142,  0){$\mathtt{2}$}
\put(125, -1){\framebox(24,10){}}
\put(152,  0){$:P_0$}
\put( 38, 30){$\mathtt{2}$}
\put( 70, 30){$\mathtt{2}$}
\put( 94, 30){$\mathtt{1}$}
\put(110, 30){$\mathtt{1}$}
\put(134, 30){$\mathtt{2}$}
\put(133, 29){\framebox( 8,10){}}
\put(144, 30){$:P_1$}
\put( 54, 60){$\mathtt{2}$}
\put(102, 60){$\mathtt{2}$}
\put(134, 60){$\mathtt{1}$}
\put(158, 60){$\mathtt{1}$}
\put(133, 59){\framebox(32,10){}}
\put(168, 60){$:P_2$}
\put( 78, 90){$\mathtt{2}$}
\put(146, 90){$\mathtt{2}$}
\put(145, 89){\framebox( 8,10){}}
\put(156, 90){$:P_3$}
\multiput( 33, 13)(32, 0){2}{\qbezier( 0, 0)( 4, 5)( 8,10) \qbezier(16, 0)(12, 5)( 8,10)}
\put( 97, 13){\line(0, 1){ 10}}
\put(113, 13){\line(0, 1){ 10}}
\multiput(129, 13)( 0, 0){1}{\qbezier( 0, 0)( 4, 5)( 8,10) \qbezier(16, 0)(12, 5)( 8,10)}
\multiput( 41, 43)( 0, 0){1}{\qbezier( 0, 0)( 8, 5)(16,10) \qbezier(32, 0)(24, 5)(16,10)}
\multiput( 97, 43)(32, 0){1}{\qbezier( 0, 0)( 4, 5)( 8,10) \qbezier(16, 0)(12, 5)( 8,10)}
\put(137, 43){\line(0, 1){ 10}}
\multiput( 57, 73)( 0, 0){1}{\qbezier( 0, 0)(12, 5)(24,10) \qbezier(48, 0)(36, 5)(24,10)}
\multiput(137, 73)( 0, 0){1}{\qbezier( 0, 0)( 6, 5)(12,10) \qbezier(24, 0)(18, 5)(12,10)}
\end{picture}
\\
(a) & (b)
\end{tabular}
}
\caption{\label{fig: fourth second} The second part of the fourth increment of the pointer $P_0$.}
\end{center}
\end{figure}
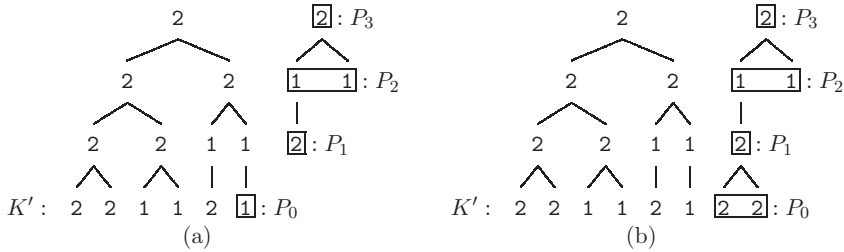

Figure \ref{fig: fourth second} illustrates the second part of the fourth increment of the pointer $P_0$ in the algorithm. 
(a) We have not yet used any of the symbols held by $P_2$ and therefore the current one is the first. Then the new symbol in $P_1$ is $\mathtt{2}$ since the current symbol in $P_2$ is $\mathtt{1}$ and $P_1=\mathtt{1}$.
(b) The new run to assign to $P_0$ is now $\mathtt{22}$ since the first symbol in $P_1$ is $\mathtt{2}$ and $P_0=\mathtt{1}$. 
\qed
\end{example}

Note that the algorithm does not need to keep track of the tree structure that it steps through. The algorithm only keeps track of the current contents of the pointers $P_k$ and how many of each symbol we have seen in $P_0$.

\section{Run Time Analysis of the Algorithm}
\label{sec: analysis}

Let $t_n$ be the number of $\mathtt{2}$s in $K$ up to and including position $n$. That is $t_{n} = |\{i: K_{i}=\mathtt{1}, 1\leq i \leq n\}|$. Recall that we have already similarly defined $o_n$ as the number of ones. By considering words of the form 
\[	\mathtt{11211} 
	\quad \textnormal{and} \quad
	\mathtt{22122}
\]
we see that we have the bounds 
\begin{equation}
\label{eq: frequency bound}
	\frac{1}{4} \leq \frac{o_n}{t_n} \leq 4
\end{equation}
for $n\geq 2$. For the analysis, let $P(n)$ be the number of pointers used by Algorithm \ref{alg: main algorithm} to calculate $o
_n$.

\begin{proposition}
\label{prop: space}
The amount of space used by Algorithm \ref{alg: main algorithm} to find $o_n$ is logarithmic in $n$. That is, $P(n) = O(\log n)$.
\end{proposition}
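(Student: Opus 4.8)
The plan is to track how far left the ancestor held by each pointer sits as we climb the tree, and to show that this position shrinks geometrically with height. First I would freeze the algorithm at the moment $P_0$ holds the run containing the $n$th letter, and for each height $k$ let $m_k$ denote the position in row $k$ (counting from the left) of the ancestor held by $P_k$; that is, $m_k$ is the number of symbols of row $k$ from the start up to and including that ancestor. Every row of the tree is a copy of the sequence, since each row is the run-length encoding of the one below and the Kolakoski sequence is its own run-length encoding. The number of pointers is $P(n) = N+1$, where $P_N$ holds the left most ancestor, so it suffices to bound the height $N$. Because runs have length at most $2$, covering position $n$ uses runs totalling at most $n+1$ symbols, so $m_0 \le n+1 = O(n)$, while $m_N \ge 1$.

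The key step is a recursion relating consecutive heights. A symbol $\mathtt{1}$ in row $k+1$ generates a run of length $1$ in row $k$, and a symbol $\mathtt{2}$ generates a run of length $2$. Among the first $m_{k+1}$ symbols of row $k+1$ there are $o_{m_{k+1}}$ ones and $t_{m_{k+1}}$ twos, and these generate exactly the first $m_k$ symbols of row $k$, so the total generated length is
\[
	m_k = o_{m_{k+1}} + 2\,t_{m_{k+1}} = m_{k+1} + t_{m_{k+1}},
\]
using $o_{m_{k+1}} + t_{m_{k+1}} = m_{k+1}$. (Here the rows are really copies of $K'$ rather than $K$, but the two differ only by the leading $\mathtt{1}$, which shifts each count by at most $1$ and does not affect the argument.)

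I would then feed the frequency bound (\ref{eq: frequency bound}) into this recursion to extract a contraction factor strictly below $1$. From $o_j \le 4 t_j$ one gets $j = o_j + t_j \le 5 t_j$, hence $t_j \ge j/5$; applied with $j = m_{k+1}$, the recursion gives $m_k \ge \tfrac{6}{5} m_{k+1}$, i.e.\ $m_{k+1} \le \tfrac{5}{6} m_k$. Iterating from $k=0$ yields $m_N \le (5/6)^N m_0$. Since $m_N \ge 1$ and $m_0 = O(n)$, this forces $(6/5)^N = O(n)$, so $N \le \log_{6/5} n + O(1)$, and therefore $P(n) = N + 1 = O(\log n)$.

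The main obstacle is setting up the position recursion cleanly, namely identifying $m_k$ with a genuine position in row $k$ and justifying that each row is (up to the leading $\mathtt{1}$) a copy of the sequence, so that the counts $o_{m_{k+1}}$ and $t_{m_{k+1}}$ are governed by (\ref{eq: frequency bound}). Once the recursion $m_k = m_{k+1} + t_{m_{k+1}}$ is in hand, the contraction and the resulting logarithmic bound are routine; the only point to watch is that the frequency bound is stated for indices $\ge 2$, so the final few steps near the top of the tree must be absorbed into the $O(1)$ additive term.
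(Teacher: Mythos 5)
Your proposal is correct and follows essentially the same route as the paper: the paper derives the contraction factor $\frac{5}{6}$ per level from the frequency bound (\ref{eq: frequency bound}) via the length ratios $\frac{6}{5} \leq \frac{|w_{k+1}|}{|w_k|}$ of the Kolakoski fan words $w_k$, which is exactly your position recursion $m_k = m_{k+1} + t_{m_{k+1}} \geq \frac{6}{5}\,m_{k+1}$ in different clothing, and both arguments conclude with $P(n) \leq \left\lceil \frac{1}{\log\frac{6}{5}}\log n\right\rceil = O(\log n)$. If anything, your explicit recursion and your remark about absorbing boundary effects (the leading $\mathtt{1}$, the restriction $n\geq 2$ in the frequency bound) into the $O(1)$ term make the argument slightly more detailed than the paper's.
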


\begin{proof}
Let $w_0 = \mathtt{122}$ and $w_1 = \mathtt{12211}$ and similarly let $w_k$ be the run length sequence defining $w_{k+1}$. Then $w_k$ is a prefix of the sequence $K$ for all $k\geq0$. (The collection of the words $w_k$ is known as the Kolakoski fan.)  By the frequency bound (\ref{eq: frequency bound}) it follows that 
\[	\frac{6}{5} \leq \frac{|w_{k+1}|}{|w_{k}|} \leq \frac{9}{5} 
\]
whenever $k\geq 1$ and where $|\cdot|$ denotes the length of a word. 

This implies that if pointer $P_0$ holds the symbol at position $n$ in $K'$ then the pointer $P_1$ is at most at position $\frac{5}{6}n$ and at least at position $\frac{5}{9}n$ in the Kolakoski sequence. This argument can now be applied to all pointers. Therefore we see that we have a bound on the number of pointers
\[	P(n) \leq \left\lceil \frac{1}{\log \frac{6}{5}} \, \log n \right\rceil = O(\log n), 
\]
which completes the proof.
\end{proof}

If Conjecture \ref{conj: distribution} were shown to be true, it would follow that the number of pointers needed to find $o_n$ is $P(n) \approx \frac{1}{\log \frac{3}{2}} \, \log n$.

\begin{proposition}
\label{prop: time}
The Algorithm \ref{alg: main algorithm} runs in (amortized) linear time. That is, to find $o_n$ we have to do an amount of work of order $O(n)$.
\end{proposition}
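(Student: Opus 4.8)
The plan is to bound the \emph{total} number of pointer-increment operations performed over the entire run, rather than the cost of a single increment; this is exactly what the word ``amortized'' in the statement signals. First I would observe that, since every pointer value lies in the four-element set $S$, a single invocation of \texttt{IncrementPointer(k)} performs only $O(1)$ work apart from the one recursive call it may make to \texttt{IncrementPointer(k+1)}; likewise, updating the running tallies of $\mathtt{1}$s and $\mathtt{2}$s costs $O(1)$ per step in the unit-cost model (as is already implicitly assumed in Proposition \ref{prop: space}, where space is measured in pointers rather than bits). Consequently the total running time is, up to a constant factor, the total number of calls to \texttt{IncrementPointer} summed over all levels $k$.

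Next I would let $C_k$ denote the number of times $P_k$ is incremented while $P_0$ is advanced to position $n$ in $K'$, so that each invocation of \texttt{IncrementPointer(k)} contributes exactly one unit to $C_k$ and the total work is $\Theta\!\bigl(\sum_{k\ge 0} C_k\bigr)$. Since each increment of $P_k$ advances the ancestor at level $k$ by one run, hence by at least one symbol position, $C_k$ is at most the position reached by $P_k$. The heart of the argument is then the same nesting estimate used for Proposition \ref{prop: space}: by the frequency bound (\ref{eq: frequency bound}) and the Kolakoski-fan length ratios, if $P_0$ is at position $n$ then $P_k$ sits at position at most $(5/6)^k n$. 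This gives the geometric bound
\[
	\sum_{k\ge 0} C_k \;\le\; \sum_{k\ge 0}\Bigl(\tfrac{5}{6}\Bigr)^{k} n \;=\; 6n \;=\; O(n),
\]
and absorbing the constant $O(1)$ per-call cost yields the claimed linear total running time.

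I expect the main obstacle to be making the identity ``number of invocations of \texttt{IncrementPointer(k)} $=$ number of increments of $P_k$ $\le$ position of $P_k$'' fully rigorous, since it requires arguing that the recursion genuinely ascends one level at a time and that $P_k$ is refilled exactly once per invocation (the returned run being discarded from $P_{k+1}$ only after it has been consumed). It is worth stressing that this is precisely where amortization is essential: by Proposition \ref{prop: space} an individual increment of $P_0$ can cascade all the way up to the top pointer $P_N$ with $N = O(\log n)$, so the worst-case cost of a single step is $\Theta(\log n)$, not $O(1)$; only after summing over the whole computation, where such deep cascades are exponentially rare as $k$ grows, does the geometric series collapse to $O(n)$.
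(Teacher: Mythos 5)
Your proof is correct and reaches the paper's bound by the same overall skeleton --- total work equals the sum over levels of the number of pointer changes, dominated by a geometric series with ratio $\frac{5}{6}$ --- but you derive the per-level decay by a genuinely different route. The paper argues locally at each level: it counts how often $P_k$ holds a single symbol ($a_k$) versus two symbols ($b_k$), shows $a_k \le 5 b_k$ via the frequency bound (\ref{eq: frequency bound}) together with the observation that each two-symbol hold is followed by exactly one single-symbol hold, and, since $P_{k+1}$ is touched only when $P_k$ holds a single symbol, obtains the recursion $p_{k+1}(n) \le \frac{5}{6} p_k(n)$ of (\ref{eq: bound on work between pointers}). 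You instead recycle the position-nesting estimate from the proof of Proposition \ref{prop: space}: each invocation of \texttt{IncrementPointer}$(k)$ changes $P_k$ exactly once and advances the level-$k$ position by exactly one symbol (your phrase ``by one run'' is slightly off, since a two-symbol run is consumed over two changes, but the inequality $C_k \le \mathrm{pos}_k$ only needs ``at least one symbol'', so nothing breaks), whence $C_k \le \left(\tfrac{5}{6}\right)^k n$ and $\sum_k C_k \le 6n$. Your route buys a pleasant unification: a single invariant (one content change equals one symbol advance) drives both the space and the time analysis, and it subsumes the paper's separate observations $p_0(n) = p_1(n) = n$ as special cases, since increments of $P_0$ are exactly symbols consumed at level $1$. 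The cost is that all the rigor is pushed into the claim $\mathrm{pos}_k \le \left(\tfrac{5}{6}\right)^k n$, which --- exactly like the paper's version --- holds only up to per-level additive corrections for initialization and run boundaries (one really has $\mathrm{pos}_k \le \tfrac{5}{6}\,\mathrm{pos}_{k-1} + O(1)$); these corrections sum to $O(\log n)$ and are harmless, and indeed the paper absorbs the same slack into its $c \log n$ term in (\ref{eq: sum of work}). Your closing remark on amortization is accurate and worth keeping: a single increment can cascade through all $O(\log n)$ levels, so only the summed accounting, where deep cascades are geometrically rare, yields the linear total.
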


\begin{proof}
Let us consider the maximal amount of work we have to do to make $n$ increments of the pointer $P_0$ (to generate $n$ runs). Note that making $n$ increments of $P_0$ will actually be enough to find at least $o_{\frac{6}{5}n}$, since in each step we generate a run of one or two symbols. Hence, as we seek a maximum, and including the factor $\frac{6}{5}$ would decrease the calculated amount of work by a constant factor, we may simplify our calculation by disregarding this factor.

Let $p_k(n)$ be the number of times we change the contents of pointer $P_k$ under these $n$ increments. Then the sum of the $p_k$s will give us the total amount of work we have to do. It is clear that $p_0(n) = n$, since we change $P_0$ at each increment, and from the algorithm we see directly that $p_1(n) = n$. The other pointers do not change every time; for $k\geq2$ we make a change to $P_k$ only when $P_{k-1}$ consists of a single symbol. 

Let $a_k(n)$ be the number of times the pointer $P_k$ holds a single symbol under $n$ increments of $P_0$. Similarly let $b_k(n)$ be the number of times that $P_k$ holds two symbols under the $n$ increments of $P_0$. 
From the algorithm we see that to find the maximal amount of work, we have to look for the maximal number of single-symbol pointer contents, since this is what forces us to go recursively higher in the tree.
For the pointer $P_0$ it follows from (\ref{eq: frequency bound}) that we have the bounds
\[	\frac{1}{4} \leq \frac{a_0(n)}{b_0(n)} \leq 4
\]
for $n\geq 1$. For pointers higher up, we have that the number of times $P_k$ holds a single symbol is at most four times the number of times it holds two symbols plus the number of times it holds two symbols, since in the latter case $P_k$ will hold a single symbol in the next step of the algorithm. 
This gives
\[	a_k(n) \leq 4 b_k(n) + b_k(n) = 5 b_n(k)
\]
Therefore, our upper bound on the number of times a pointer holds a single symbol gives the bound on the amount of work we have to do with a pointer $P_k$ compared to the amount of work for the pointer holding the children of $P_k$. This is 
\begin{equation}
\label{eq: bound on work between pointers}
	p_{k+1}(n) \leq \frac{5}{6}\, p_k(n) 
\end{equation}
for $k\geq1$. The total amount of work we now have to do to increment the pointer $P_0$ $n$ times is therefore bounded by the initial amount of work plus the convergent geometric series obtained from (\ref{eq: bound on work between pointers})
We have
\begin{equation}
\label{eq: sum of work}
P(n) =  \sum_{i=0}^{\infty}p_i(n) \leq  n + c\log n + n\sum_{i=0}^{\infty} \left(\frac{5}{6}\right)^i \leq (7+C)n,  
\end{equation}
where $c\log n$ is the initial amount of work for each pointer before we can apply our estimates above. 
\end{proof}

\section{Generalised Kolakoski Sequences}
\label{sec: general}

In this this section we remark that our algorithm is also applicable to a general Kolakoski sequence. By a generalised Kolakoski sequence we mean a sequence that is defined as its symbols' run length, as for the classical Kolakoski sequence, but the symbols may be taken from any alphabet $\{r, s\}$, where $r$ and $s$ are natural numbers, as discussed in \cite{dekking80}. We denote a generalised Kolakoski sequence over $r$ and $s$ with $K(r,s)$ and shall assume that $K(r,s)$ starts with the symbol $r$. The classical Kolakoski sequence is then $K = K(\mathtt{1},\mathtt{2})$. 

It is known that if $r+s$ is an even number, then the letter frequency in $K(r,s)$ can be calculated; see \cite{baake, sing2002, sing2003, sing2010}. When $r+s$ is odd, the existence and the value of the letter frequencies are still unknown, but are believed to exist and equal $\frac{1}{2}$. 

Our algorithm easily adopts to count the letters in a generalised Kola\-koski sequence; we may only have to change the initiation of new pointers. By applying the same idea as in the proof of Proposition \ref{prop: space} we see that the algorithm in this case with a generalised Kolakoski sequence uses fewer pointers than for the classical Kolakoski sequence, and therefore the space requirement must again be at most logarithmic. 

Similarly, by looking at the proof of Proposition \ref{prop: time} we see that the number of times we use a pointer for a general Kolakoski sequence before having to consider its ancestor is longer than for the classical Kolakoski sequence. 
Therefore the bounding factor for the quoted amount of work between two consecutive levels (\ref{eq: bound on work between pointers}) must be smaller than the $\frac{5}{6}$ given for the classical Kolakoski sequence. This gives then, by summing up as in (\ref{eq: sum of work}), that the total amount of work for the generalised Kolakoski sequence is also linear in $n$.

\section{Calculations}
\label{sec: calculations}

In Table \ref{table: classical kolakoski} we present a short output from an implementation in Java of our Algorithm \ref{alg: main algorithm} for calculating the number of $\mathtt{1}$s in the classical Kolakoski sequence. The program  was run on a standard PC. In Table \ref{table: generalised kolakoski} we present results of a calculation of the number of $\mathtt{2}$s in the generalised Kolakoski sequence $K(\mathtt{2}, \mathtt{3})$, the sequence \seqnum{A071820} in the On-Line Encyclopedia of Integer Sequences \cite{sloane}. 

We denote for the classical Kolakoski sequence the maximal deviation of the proportion of $\mathtt{1}$s from $\frac{1}{2}$ in a logarithmic decade by 
\[	D(n) = \max_{\frac{1}{10}n < i \leq n }\left|\frac{1}{2}-\frac{o_i}{i}\right|,
\]
where $o_i$ is the number of $\mathtt{1}$s up to position $i$. We can similarly define the deviation for the generalised Kolakoski sequence $K(\mathtt{2}, \mathtt{3})$.

\begin{table}[ht]
\begin{center}
\texttt{
\renewcommand{\arraystretch}{1.2}
\begin{tabular}{|*{5}{r|}}
\hline 
\multicolumn{1}{|c|}{$n$} &{\textnormal{Number of $\mathtt{1}$s}}& \multicolumn{1}{|c|}{ $P(n)$ } & \multicolumn{1}{|c|}{$D(n)$} \\ \hline 
     1 &                      1 &    &  \\ \hline
   $\mathtt{10^{ 1}}$ &                      5 &  4 & $\mathtt{1.667\cdot 10^{-1}}$ \\ \hline
   $\mathtt{10^{ 2}}$ &                     49 & 10 & $\mathtt{8.333\cdot 10^{-2}}$ \\ \hline
   $\mathtt{10^{ 3}}$ &                    502 & 16 & $\mathtt{1.351\cdot 10^{-2}}$ \\ \hline
   $\mathtt{10^{ 4}}$ &                 4\,996 & 22 & $\mathtt{3.588\cdot 10^{-3}}$ \\ \hline
   $\mathtt{10^{ 5}}$ &                49\,972 & 27 & $\mathtt{5.481\cdot 10^{-4}}$ \\ \hline   
   $\mathtt{10^{ 6}}$ &               499\,986 & 33 & $\mathtt{2.800\cdot 10^{-4}}$ \\ \hline   
   $\mathtt{10^{ 7}}$ &            5\,000\,046 & 39 & $\mathtt{3.892\cdot 10^{-5}}$ \\ \hline
   $\mathtt{10^{ 8}}$ &           50\,000\,675 & 44 & $\mathtt{2.054\cdot 10^{-5}}$ \\ \hline
   $\mathtt{10^{ 9}}$ &          500\,001\,223 & 50 & $\mathtt{8.586\cdot 10^{-6}}$ \\ \hline
   $\mathtt{10^{10}}$ &       4\,999\,997\,671 & 56 & $\mathtt{2.152\cdot 10^{-6}}$ \\ \hline
   $\mathtt{10^{11}}$ &      50\,000\,001\,587 & 61 & $\mathtt{4.453\cdot 10^{-7}}$ \\ \hline
   $\mathtt{10^{12}}$ &     500\,000\,050\,701 & 67 & $\mathtt{2.140\cdot 10^{-7}}$ \\ \hline
   $\mathtt{10^{13}}$ &  5\,000\,000\,008\,159 & 73 & $\mathtt{6.774\cdot 10^{-8}}$ \\ \hline
\end{tabular}}
\caption{\label{table: classical kolakoski}The output of the calculation of the number of $\mathtt{1}$s in the classical Kolakoski sequence. Our result here exceeds the calculations made by Steinsky up to $n = 3\cdot 10^8$, and Monteil up to $n=10^{11}$, as mentioned in the introduction.  The column with the number of $\mathtt{1}$s is the sequence \seqnum{A195206} in the On-Line Encyclopedia of Integer Sequences \cite{sloane}.}
\end{center}
\end{table}

\begin{table}[ht]
\begin{center}
\texttt{
\renewcommand{\arraystretch}{1.2}
\begin{tabular}{|*{5}{r|}}
\hline 
\multicolumn{1}{|c|}{$n$} & \multicolumn{1}{|c|}{\textnormal{Number of $\mathtt{2}$s}} & \multicolumn{1}{|c|}{ $P(n)$  }& \multicolumn{1}{|c|}{$D(n)$} \\ \hline 
                    1 &                     1 &    &          \\ \hline
   $\mathtt{10^{ 1}}$ &                     5 &  3 & $\mathtt{2.143\cdot 10^{-1}}$ \\ \hline
   $\mathtt{10^{ 2}}$ &                    51 &  6 & $\mathtt{8.333\cdot 10^{-2}}$ \\ \hline
   $\mathtt{10^{ 3}}$ &                   502 &  9 & $\mathtt{2.459\cdot 10^{-2}}$ \\ \hline
   $\mathtt{10^{ 4}}$ &                4\,995 & 11 & $\mathtt{3.318\cdot 10^{-3}}$ \\ \hline
   $\mathtt{10^{ 5}}$ &               49\,999 & 14 & $\mathtt{6.353\cdot 10^{-4}}$ \\ \hline
   $\mathtt{10^{ 6}}$ &              499\,980 & 16 & $\mathtt{8.448\cdot 10^{-5}}$ \\ \hline
   $\mathtt{10^{ 7}}$ &           4\,999\,995 & 19 & $\mathtt{2.464\cdot 10^{-5}}$ \\ \hline
   $\mathtt{10^{ 8}}$ &          50\,000\,202 & 21 & $\mathtt{7.936\cdot 10^{-6}}$ \\ \hline
   $\mathtt{10^{ 9}}$ &         499\,999\,731 & 24 & $\mathtt{3.279\cdot 10^{-6}}$ \\ \hline
   $\mathtt{10^{10}}$ &      5\,000\,005\,565 & 26 & $\mathtt{8.382\cdot 10^{-7}}$ \\ \hline
   $\mathtt{10^{11}}$ &     50\,000\,013\,114 & 29 & $\mathtt{5.606\cdot 10^{-7}}$ \\ \hline
   $\mathtt{10^{12}}$ &    499\,999\,997\,503 & 31 & $\mathtt{1.430\cdot 10^{-7}}$ \\ \hline
   $\mathtt{10^{13}}$ & 4\,999\,999\,971\,938 & 34 & $\mathtt{3.744\cdot 10^{-8}}$ \\ \hline
\end{tabular}}
\caption{\label{table: generalised kolakoski}The output of the calculation of the number of $\mathtt{2}$s in the generalised Kolakoski sequence $K(\mathtt{2},\mathtt{3})$. The column with the number of $\mathtt{2}$s is the sequence \seqnum{A195211} in the On-Line Encyclopedia of Integer Sequences \cite{sloane}.}
\end{center}
\end{table}

\section{Acknowledgement}

The author wishes to thank M. Baake, P. Bugarin, V. Terauds and P. Zeiner at Bielefeld University, Germany, for our discussions of the problem and for reading drafts of the manuscript. This work was supported by the German Research Council (DFG), via CRC 701.

\bigskip
\hrule
\bigskip

\noindent 2010 {\it Mathematics Subject Classification}:
Primary 68R15; Secondary 68Q25.

\noindent \emph{Keywords:} Combinatorics on words, Analysis of algorithms and problem complexity.

\end{document}